\def \vec#1{{\bf{#1}}}
\newcommand{\curl}{\vec{\nabla}\times}
\newcommand{\mydiv}{\vec{\nabla}\cdot}
\newcommand{\mygrad}{\vec{\nabla}}
\newcommand{\be}{\begin{equation}}
\newcommand{\ee}{\end{equation}}
\def\pd#1#2{\dfrac{\partial #1}{\partial #2}}
\def \mat#1{\underline{\underline{#1}}}
\newcommand{\bi}{\begin{itemize}}
\newcommand{\ei}{\end{itemize}}
\newcommand{\vb}{\mbox{\boldmath$\beta$}}
\newtheorem{thm}{Theorem}[section]
\newtheorem{lem}[thm]{Lemma}
\newenvironment{rem}[1][Remark]{\begin{trivlist}
\item[\hskip \labelsep {\bfseries #1}]}{\end{trivlist}}
\journal{Computational Physics}
\begin{document}
\begin{frontmatter}
\title{First-Order System Least Squares and the Energetic Variational Approach for Two-Phase Flow}

\author[label1]{J. H. Adler\corref{cor1}}
\ead{adler@math.psu.edu}
\address[label1]{Mathematics Department, Pennsylvania State University,
University Park, PA 16802}
\author[label1]{J. Brannick}
\ead{brannick@math.psu.edu}
\author[label1]{C. Liu}
\ead{liu@math.psu.edu}
\author[label2]{T. Manteuffel}
\ead{tmanteuf@colorado.edu}
\address[label2]{Department of Applied Mathematics, University of Colorado at Boulder, Boulder, CO 80309-0526}
\author[label1]{L. Zikatanov}
\ead{ltz@math.psu.edu}
\cortext[cor1]{Corresponding author}

\begin{abstract}
This paper develops a first-order system least-squares (FOSLS) formulation for equations of two-phase flow.  The main goal is to show that this discretization, along with numerical techniques such as nested iteration, algebraic multigrid, and adaptive local refinement, can be used to solve these types of complex fluid flow problems.  In addition, from an energetic variational approach, it can be shown that an important quantity to preserve in a given simulation is the energy law.  We discuss the energy law and inherent structure for two-phase flow using the Allen-Cahn interface model and indicate how it is related to other complex fluid models, such as magnetohydrodynamics.   Finally, we show that, using the FOSLS framework, one can still satisfy the appropriate energy law globally while using well-known numerical techniques.
\end{abstract}

\begin{keyword}
multiphase flow, energetic variational approach, algebraic multigrid, first-order system least squares, nested iteration
\end{keyword}

\end{frontmatter}

\section{Introduction}\label{sec1}

\looseness=-1 Complex fluids involve multi-physics and multi-scale phenomena that require advanced techniques in scientific computing in order to be solved efficiently.  The goal of this paper is to present numerical techniques that have been well tested and shown to give accurate solutions with a reasonable amount of computational cost, while also preserving the global energy of the system.  While the systems can be described with partial differential equations (PDEs), in this case a time-dependent and nonlinear system of equations, an underlying energy law also describes the physics.  It is from these laws that the PDE is derived.  Therefore, approximate solutions to complex fluid problems should adequately approximate this energy law.  

Here, we look at one type of complex fluid problem, two-phase flow. The conventional sharp interface description of the mixture of two Newtonian fluids with density, $\rho_i$, and viscosity, $\mu_i$, assumes the mixture occupies the overall domain, $\Omega = \Omega_1 \cup \Omega_2$, as in figure \ref{fluidpic}.  The interface between two materials is the free moving interface, $\Gamma_t$.

\begin{figure}[h!]
\centering
\includegraphics[scale=0.5]{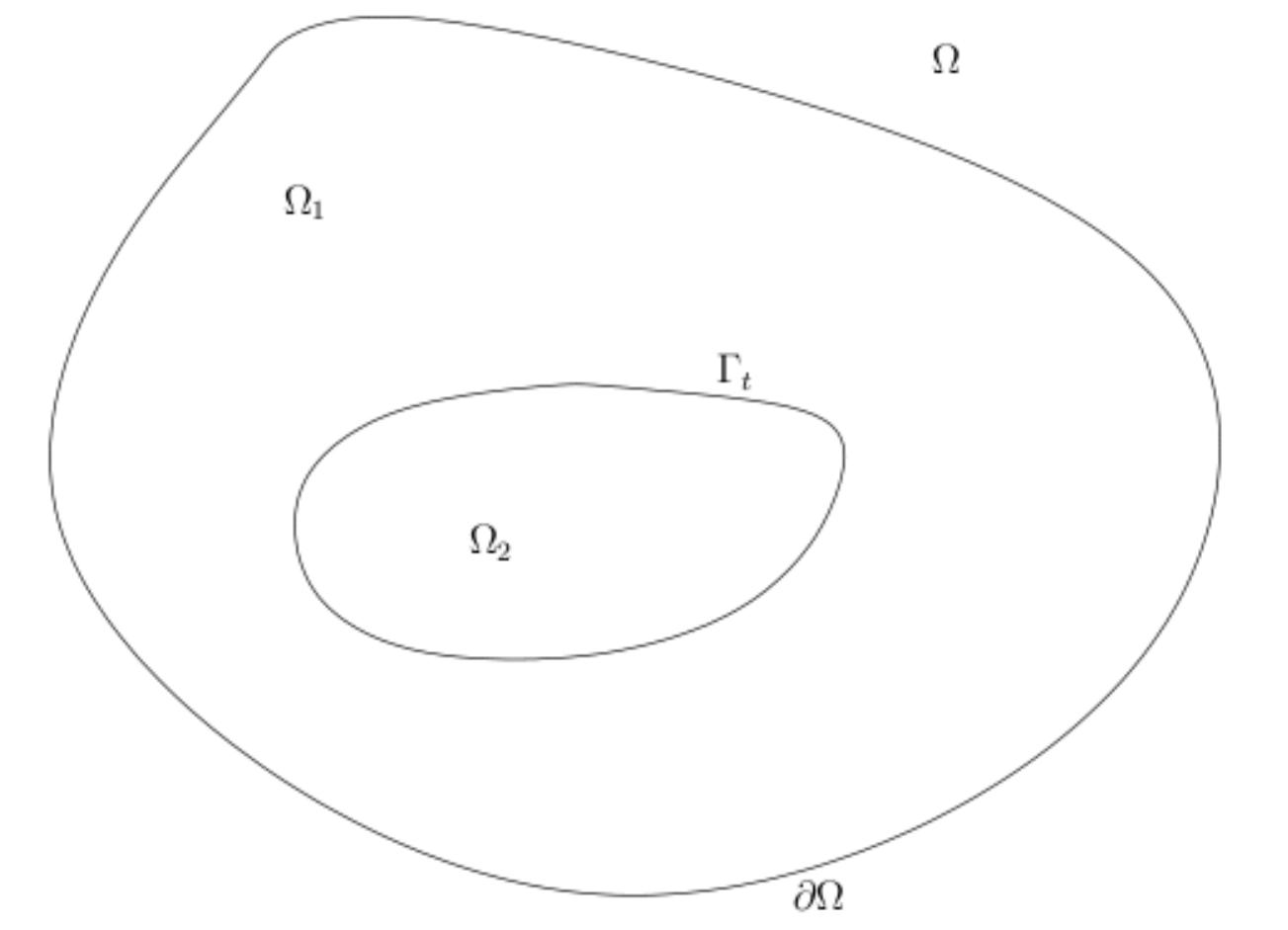}
\caption{Fluid 1 is in region $\Omega_1$ and fluid 2 is region $\Omega_2$.  $\Gamma_t$ represents the mixing interface.}
\label{fluidpic}
\end{figure}

\noindent The overall system includes the Navier-Stokes equation in each $\Omega_i$, where $i=1\hspace{0.05in} \mbox{or} \hspace{0.05in} 2$:

\begin{eqnarray*}\rho_i\left ( \pd{\vec{u}_i}{t} + \left ( \vec{u}_i\cdot\mygrad\right )\vec{u}_i\right ) + \mygrad p_i &=& \mu_i\mygrad^2\vec{u}_i,\\
\mydiv\vec{u}_i &=& 0.\\
\end{eqnarray*} 
Here, $\vec{u}_i$, $p_i$, $\rho_i$, and $\mu_i$ are the velocities, pressures, densities, and viscosities of each fluid, respectively.  Two types of boundary conditions are possible.  Kinematic boundary conditions yield $\vec{u}_i\cdot\vec{n} = V_n$, where $\vec{n}$ is the ``directional" normal of $\Gamma_t$ and $V_n$ is the normal speed of $\Gamma_t$.  Traction free boundary conditions (also known as force balance conditions) satisfy $\left [ \tau_i \right ] \cdot \vec{n} = $ surface force on $\Gamma_t,$ where $\left [\tau_i \right ]$ is the jump of the stress tensor,
$$\tau_i = 2\mu_i\mat{D}_i + p_i\mat{I}, \hspace{0.3in} \mat{D}_i = \dfrac{\mygrad\vec{u}_i + (\mygrad\vec{u}_i)^T}{2}.$$

\begin{rem} Assume $\vec{u}_2|_{\partial\Omega} = 0$.  With the traction free boundary condition, this implies $\Gamma_t \cap \partial\Omega = \emptyset$.
\end{rem}

\noindent In case the surface force is attributed to the surface tension, i.e., $\vec{F}_{\mbox{surface}} = kH\vec{n},$
where $H$ is the mean curvature of $\Gamma_t$ and $k$ is the surface tension parameter, then the whole system possesses the underlying energy law,

$$\pd{}{t}\left ( \sum_{i=1}^2 \int_{\Omega_i} \frac{1}{2} \rho_i |\vec{u}_i|^2 d\vec{x} + k\mbox{Area}(\Gamma_t) \right ) = -\left ( \sum_{i=1}^2 \int_{\Omega_i} \mu_i |\mygrad\vec{u}_i|^2 d\vec{x} \right ).$$

In this paper, we take a different approach, called the diffusive interface method (phase-field method).  The label function, $\phi(\vec{x},t)$, is introduced such that
$$\phi(\vec{x},t) = \left \{ \begin{array}{cc} +1 & \mbox{in fluid 1,}\\-1 & \mbox{in fluid 2.}\\ \end{array} \right .$$
The interaction between the two materials is reflected through the ``mixing" energy, or the Ginzburg-Landau type of energy:
$$\int_{\Omega} W(\phi)  d\vec{x} = \int_{\Omega} \frac{1}{2}|\mygrad\phi|^2 + \frac{1}{4\epsilon^2}\left (\phi^2 - 1\right )^2 d\vec{x},$$
where $\int_{\Omega} \frac{1}{2}|\mygrad\phi|^2d\vec{x}$ represents the tendency of the mixture to be homogeneous.  In other words, it is the ``phillic" interaction.  In addition, $\int_{\Omega} \frac{1}{4\epsilon^2} \left ( \phi^2-1\right )^2 d \vec{x}$ represents the ``phobic" interaction, or the tendency of the mixture to be separated.  The parameter $\epsilon$ reflects the competition between the two opposite tendencies.  

Due to the nature of this empirical energy, it is present in many theories in physics, such as superconductivity \cite{1989GennesP-aa,1950GinzburgV_LandauL-aa} and phase transitions \cite{2002ChenL-aa,2010HyonY_KwakD_LiuC-aa}.  It can be shown that $\phi\rightarrow \pm 1$ almost everywhere as $\epsilon\rightarrow 0$, thus representing the separation of the two immiscible materials.  Moreover, the thermodynamics and patterns of the interface, $\Gamma_t$, can be captured by the level sets of $\phi$, for instance, $\left \{ \vec{x} | \phi(\vec{x},t) = 0 \right \}$.  In addition, the energy $\epsilon \int_{\Omega} W(\phi) \rightarrow c_0 \mbox{Area}(\Gamma_t)$, which is equivalent to the surface tension case in the sharp interface formulation \cite{2004YueP_FengJ_LiuC_ShenJ-aa}.  It is worth pointing out that, while one can view the diffusive interface formula as being the regularizing approximation of the sharp interface formulation, it is more physical to view the sharp interface formulation as being the ideal approximation of the diffusive interface method.  Therefore, it can be understood that the phase field description really captures the microscopic interaction of the mixtures. 

In the examples presented here, we consider problems where we have one or more bubbles of one phase interacting in a region of a fluid of the other phase.  From this system, the continuous energy laws are derived as is the continuous system of PDEs.  We then show that these laws are adequately preserved numerically.  

To solve the system, we use the first-order system least-squares (FOSLS) finite element approach.  This method has been well-tested on many different types of PDEs and, specifically, on several complex fluid systems \cite{1994CaiZ_LazarovR_ManteuffelT_McCormickS-aa,1997CaiZ_ManteuffelT_McCormickS-aa,1998BochevP_CaiZ_ManteuffelT_McCormickS-aa,1999BochevP_ManteuffelT_McCormickS-aa,2007HeysJ_LeeE_ManteuffelT_McCormickS-aa}, including those of magnetohydrodynamics (MHD) \cite{2010AdlerJ_ManteuffelT_McCormickS_RugeJ-aa,2010AdlerJ_ManteuffelT_McCormickS_RugeJ_SandersG-aa}.  We show that use of the FOSLS method allows both an accurate solution of the system and preservation of the energy laws.  Moreover, because we are using this method, we have the advantage of being able to use other numerical techniques, such as nested iteration and adaptive local refinement, which greatly improve the efficiency of solving these problems.  

This paper is structured as follows.  In Section \ref{equations}, we describe the two-phase model and present the equations to be solved.  In section \ref{fosls}, we discuss the FOSLS formulation that is used and show how it presents a well-posed discrete problem.  Section \ref{energetics} introduces the energy laws and discusses what quantities are preserved with the numerical schemes being used.  Numerical results for certain bubble test problems and an MHD simulation are presented in section \ref{results}, where it is shown that this approach is accomplishing the intended goal.  Finally, a discussion of the results and summary is given in section \ref{conclude}.

\section{The Two-Phase Equations}\label{equations}

The dynamics of mixing between two fluids plays an important role in many physical applications.  Here, we use a model that couples the Navier-Stokes equations with a phase-field transport equation.  We use the Allen-Cahn-type equations, which were originally introduced in \cite{1979AllenS_CahnJ-aa} to describe the motion of anti-phase boundaries in crystalline solids.  The Allen-Cahn equations have since been used in many interface problems in fluid dynamics through a phase-field approach \cite{2002ChenL-aa,2010HyonY_KwakD_LiuC-aa,2004YueP_FengJ_LiuC_ShenJ-aa,2003LiuC_ShenJ-aa,2010ShenJ_YangX-aa}.  They are

\begin{eqnarray}
\label{ns}\pd{\vec{u}}{t} + \left (\vec{u}\cdot\mygrad\right ) \vec{u} + \mygrad p + \lambda\mydiv\left ( \mygrad\phi\otimes\mygrad\phi \right ) - \mu \mygrad^2\vec{u} & = & 0,\\
\label{divu}\mydiv\vec{u} &=&0,\\
\label{phase}\pd{\phi}{t} + \vec{u}\cdot\mygrad\phi - \gamma\left ( \mygrad^2\phi - \frac{1}{\epsilon^2}\phi\left ( \phi^2-1\right ) \right ) & = & 0.\\ \nonumber
\end{eqnarray}
Here, $\vec{u}$ is the fluid velocity, $p$ is the fluid pressure, and $\phi$ is the phase-field function.  The physical parameters are the mixing energy parameter, $\lambda$, the viscosity of the two fluids, $\mu$, the elastic relaxation time of the system,  $\gamma$, and the mixing layer width, $\epsilon$.  We assume that both fluids have the same viscosity and mass density (which is assumed to be 1).  It should be noted that as $\epsilon \rightarrow 0$, the parameter $\lambda$ is associated with the surface tension at the boundary.  

With the ``mixing" energy, $W(\phi)$, described above, and following the general approach of Onsager \cite{1931OnsagerL-aa,1931OnsagerL-ab,1953OnsagerL_MachlupS-aa,1953MachlupS_OnsagerL-aa}, we can prescribe the whole system as a dissipative system:
$$\pd{}{t} \left ( \int_{\Omega} \frac{1}{2} \rho(\phi) |\vec{u}|^2 + \lambda W(\phi) d\vec{x} \right ) = - \int_{\Omega} \mu(\phi)|\mygrad\vec{u}|^2 + \frac{\lambda}{\gamma} |\dot{\phi}|^2 d\vec{x},$$
where $\int_{\Omega} |\dot{\phi}|^2 d\vec{x}$ represents the microscopic internal damping, with the kinematic assumption of $\dot{\phi} = \pd{\phi}{t} + \vec{u}\cdot\mygrad\phi$.  We can then employ the least action principle (LAP) for the Hamiltonian part of the system and the maximum dissipation principle (MDP) for the dissipative part of the system \cite{1931OnsagerL-aa,1931OnsagerL-ab,1953OnsagerL_MachlupS-aa,1953MachlupS_OnsagerL-aa,1963GelfandI_FominS-aa}.  In the case that $\mu_1 = \mu_2 = \mu$ and $\rho_i = 1$, we obtain equations (\ref{ns})-(\ref{phase}).  In case $\gamma=0$, i.e., we neglect the internal damping, we can show that (\ref{ns})-(\ref{phase}) approaches the sharp interface formulation \cite{2010ShenJ_YangX-aa,1994ChenX-aa}.  In particular,
$$\lambda\mydiv\left ( \mygrad \phi \otimes \mygrad \phi \right ) = \lambda \left (\mygrad^2 \phi \mygrad \phi + \mygrad \left ( \frac{|\mygrad\phi|^2}{2} \right )\right ),$$
where $\lambda \left (\mygrad^2 \phi \mygrad \phi \right )$ reflects the surface tension, $kH\vec{n}$, modulating the pressure contributions.  

\begin{rem} Later, we indicate the structural similarities between the diffusive interface formulation and that of resistive MHD systems.  In this setting, the terms $\lambda\mydiv\left ( \mygrad \phi \otimes \mygrad \phi \right ) - \mygrad \left ( \frac{|\mygrad\phi|^2}{2} \right )$ are equivalent to the Maxwell stress, and $\lambda \mygrad^2 \phi \mygrad \phi $ is equivalent to the Lorentz force.  
\end{rem}

Next, we discuss the numerical methods that are used to solve this problem.  
\section{FOSLS Formulation}\label{fosls}

To solve the nonlinear system of equations, which for discussion is denoted by $\mathcal{L}(u)=f$, a
first-order system least-squares discretization (FOSLS) is used \cite{1994CaiZ_LazarovR_ManteuffelT_McCormickS-aa,1997CaiZ_ManteuffelT_McCormickS-aa,1994PehlivanovA_CareyG_LazarovR-aa}.  First, consider a
linear first-order system, denoted by $Lu=f$. Then, the linear problem is recast as the minimization of a functional constructed by taking
the $L^2$ norm of the residual of each equation. This is written as

\begin{equation}\label{argmin}u_* = \mbox{arg}\min_{u \in \mathcal{V}}  G(u;f) := \mbox{arg}\min_{u \in \mathcal{V}}  ||Lu-f||_0^2,\end{equation}
where $u_*$ is the solution in an appropriate Hilbert space $\mathcal{V}$.\\

\noindent Thus, $u_*$ satisfies $[G'(u_*)]v = 0$, which is the Fr\'{e}chet
derivative of G at $u_*$ applied to $v\in \mathcal{V}$.  This results in the weak form of the problem:\\

find $u_* \in \mathcal{V}$ such that
\begin{equation}\label{weakform}\langle Lu_*,Lv\rangle = \langle f,Lv\rangle \hspace{0.2in}\forall v\in \mathcal{V},\end{equation}

\noindent where $\langle\cdot,\cdot\rangle$ is the usual $L^2$ inner product.  Desirable properties of the bilinear form, $\langle Lu,Lv\rangle$, are
\begin{eqnarray}
\label{cont}\mbox{continuity}&\hspace{0.3in}\langle Lu,Lv\rangle \hspace{0.1in}\leq \hspace{0.1in}c_2||u||_{\mathcal{V}}||v||_{\mathcal{V}}&\hspace{0.2in}\forall\hspace{0.03in} u,v \in \mathcal{V},\\
\label{coerc}\mbox{coercivity}&\langle Lu,Lu\rangle \hspace{0.1in}\geq\hspace{0.1in} c_1||u||^2_{\mathcal{V}}&\hspace{0.2in}\forall\hspace{0.03in} u \in \mathcal{V}. \\ \nonumber
\end{eqnarray}

\noindent These properties imply the existence of a unique solution, $u_*\in \mathcal{V}$, for the weak problem (\ref{weakform}).\\

Next, we approximate $u_*$ by restricting (\ref{argmin}) to a finite-dimensional subspace, $\mathcal{V}^h \subseteq \mathcal{V}$, which leads to (\ref{weakform}) restricted to $\mathcal{V}^h$.  Since $\mathcal{V}^h$ is a subspace of $\mathcal{V}$, the discrete problem is also well-posed.  Choosing an appropriate basis, $\mathcal{V}^h = span\{\Phi_j\}$, yields an algebraic system of equations involving the matrix $A$ with elements
$$(A)_{ij} = \langle L\Phi_j,L\Phi_i\rangle.$$
In general, if $\mathcal{V}$ is a subspace of $H^1$ (product space), and continuity and coercivity hold we say that the FOSLS functional is $H^1$ equivalent.  In this case, the linear system is amenable to an iterative solution by multilevel techniques.  In particular, AMG has been shown to work well on a wide range of such problems \cite{1994CaiZ_LazarovR_ManteuffelT_McCormickS-aa,1997CaiZ_ManteuffelT_McCormickS-aa,1998BochevP_CaiZ_ManteuffelT_McCormickS-aa,1999BochevP_ManteuffelT_McCormickS-aa,1986BrandtA-aa,2000BriggsW_HensonV_McCormickS-aa,2001CoddA-aa,2001TrottenbergU_OosterleeC_SchullerA-aa,2004RoehrleO-aa,1987RugeJ_StubenK-aa,2004WestphalC-aa}.

In addition, note that the functional yields an a posteriori error measure.  If $u^h \in \mathcal{V}^h$, then
\begin{equation}\label{apost}G(u^h;f) = ||Lu^h-f||_0^2 = ||Lu^h-Lu_*||_0^2 = ||Le^h||_0^2 \approx c||e^h||^2_{\mathcal{V}}.\end{equation}
The last relation comes from the continuity and coercivity bounds in (\ref{cont}) and (\ref{coerc}).  Thus, the functional value is equivalent to the error measured in the Hilbert space norm.  In general, the constant $c$ in equation (\ref{apost}) depends on the continuity and coercivity constants, $c_2$ and $c_1$.  These constants, of course, depend on properties of the PDE as well as on boundary conditions and the computational domain.  For this paper, we study simple domains and boundary conditions for which these constants are not large.  In addition, our numerical results show that the problem parameters do not adversely affect our accuracy either.  Addressing these issues analytically would be a topic of another paper and has been studied in other problems such as in \cite{1998BochevP_CaiZ_ManteuffelT_McCormickS-aa,1999BochevP_ManteuffelT_McCormickS-aa,2007HeysJ_LeeE_ManteuffelT_McCormickS-aa}.  Locally, on any subset of $\Omega$, such as an element, the functional yields an estimate of the error.  This property of FOSLS helps make it possible to efficiently solve complex systems.  At each step in the solution algorithm, a local  measure of the functional is available.  Since this is the norm we are minimizing, this allows judgements to be made based on estimates of the increase of accuracy that results from an increase in computational cost.  As a result, methods such as nested iteration and adaptive local refinement can be used to improve the efficiency of solving such systems.  This has been applied to various complex fluids problems, including magnetohydrodynamics \cite{2010AdlerJ_ManteuffelT_McCormickS_RugeJ-aa,2010AdlerJ_ManteuffelT_McCormickS_RugeJ_SandersG-aa,2010AdlerJ_ManteuffelT_McCormickS_NoltingJ_RugeJ_TangL-aa,2010AdlerJ_ManteuffelT_McCormickS_NoltingJ_RugeJ_TangL-ab}.  A brief description of the algorithms used is described in section \ref{results}.

\subsection{Linearization (Newton-FOSLS)}
Since the two-phase system is nonlinear, we first linearize it before we put it into a FOSLS weak form.  The system becomes
\begin{equation}\mathcal{L}(u_0+\hat{u}) \approx \mathcal{L}(u_0) + \left [\mathcal{L}'(u_0)\right ]\hat{u},\end{equation}
where $u_0$ is the current approximation and $\left [\mathcal{L}'(u_0)\right ]$ is the Fr\'{e}chet
derivative evaluated at $u_0$. The functional then becomes
\begin{equation}\label{linfunc}G(\hat{u}, f) = ||\left [\mathcal{L}'(u_0)\right ]\hat{u}- (f-\mathcal{L}(u_0))||_0^2.\end{equation}
Minimization of the linearized functional yields $\hat{u}_*$ that satisfies the weak form:\\

find $\hat{u}_* \in \mathcal{V}$ such that,
\begin{equation}\langle \left [\mathcal{L}'(u_0)\right ]\hat{u}_*,\left [\mathcal{L}'(u_0)\right ]\hat{v}\rangle = \langle(f-\mathcal{L}(u_0)),\left [\mathcal{L}'(u_0)\right ]\hat{v}\rangle\hspace{0.2in} \forall\hspace{0.05in}\hat{v} \in \mathcal{V}.\end{equation}

\noindent Once $\hat{u}$ is found, it is added to the previous iterate to get
the next approximation,
\begin{equation}u_1 = u_0 + \hat{u}.\end{equation}

\noindent Thus, in a nonlinear setting, the FOSLS approach can be applied
and, if the linearized functional retains continuity and coercivity, (\ref{cont}) and (\ref{coerc}),
it also retains the desirable
properties as described for linear systems.  In addition, the nonlinear functional, that is, the $L^2$ norm of the residual of the nonlinear system, can be computed as well as the linear functional or the $L^2$ norm of the linearized system.  This allows the two functional values to be compared after each linearization and helps determine if the Newton iterations are converging as expected.

A similar approach would be to create the FOSLS functional of the nonlinear problem and linearize this functional instead.  This FOSLS-Newton approach generally involves more terms than the Newton-FOSLS method described here.  As the approximation approaches the solution, these additional terms are higher-order and tend to zero faster than the overall functional and the two methods tend to be the same.  Thus, when nested iteration is used, as it is here, there tends to be very little difference in the convergence behavior of these two approaches.  FOSLS-Newton may be more robust in some especially challenging situations, but the Newton-FOSLS approach is simpler and has been successful in a number of applications \cite{2001CoddA-aa,2003CoddA_ManteuffelT_McCormickS-aa}, so we confine our presentation to it in this paper.

\subsection{Velocity-Grad Formulation}

To reformulate (\ref{ns})-(\ref{phase}) as a first-order system, we introduce new variables that represent the gradient of the velocity vector, $\vec{u}$, and the phase-field function, $\phi$:
\begin{equation}\mat{V} = \mygrad\vec{u} = \left ( \begin{array}{cc} v_{11}&v_{21}\\v_{21}&v_{22} \\\end{array} \right ) = \left ( \begin{array}{cc}\partial_x u_1&\partial_x u_2\\\partial_y u_1&\partial_y u_2\\\end{array} \right ),\end{equation}
\begin{equation}\label{Bgradphi}\vec{B} = \mygrad\phi = \left ( \begin{array}{c}\partial_x \phi\\\partial_y \phi\\\end{array} \right ).\end{equation}

This formulation has been shown to work well for the Navier-Stokes equations alone when velocity boundary conditions are given \cite{1998BochevP_CaiZ_ManteuffelT_McCormickS-aa,1999BochevP_ManteuffelT_McCormickS-aa}.  To make the system $H^1$ equivalent, a few auxiliary equations are added to the system.  These are consistent with the original equations and make the first-order system well-posed.  The resulting system is as follows:

\begin{eqnarray}
\label{gradV} \mat{V} - \mygrad \vec{u} & = & \mat{0},\\
\label{curlV} \curl\mat{V} & = & \vec{0},\\
\label{divu1}\mydiv\vec{u} &=&0,\\
\label{trace}\mygrad \left ( tr\mat{V} \right ) &=& \vec{0},\\
\label{nsfosls}\pd{\vec{u}}{t} + \mygrad p + \lambda\vec{B}\left ( \mydiv\vec{B} \right ) - \mu \mydiv\mat{V} & = & \vec{0},\\
\label{gradphi}\vec{B} - \mygrad\phi &=&\vec{0},\\
\label{curlB} \curl\vec{B} &=&0,\\
\label{phase1}\pd{\phi}{t} + \vec{u}\cdot\vec{B} - \gamma\left ( \mydiv\vec{B} - \frac{1}{\epsilon^2}\phi\left ( \phi^2-1\right ) \right ) & = & 0.\\ \nonumber
\end{eqnarray}

In practice, equation (\ref{trace}) is achieved by eliminating one of the diagonal components of $\mat{V}$ in terms of the other diagonal components.  This comes from the divergence-free constraint on the velocity.  Thus, in 2D, there are 13 equations and 9 unknowns and, in 3D, there are 29 equations and 16 unknowns.  

\subsection{Equivalence of the FOSLS Functional}
As described above, if continuity, (\ref{cont}), and coercivity, (\ref{coerc}), of the FOSLS functional holds, we say the functional is $H^1$ equivalent.  This can be shown for the linearized system at each time step.  We assume that an implicit backward difference time-stepping scheme is used and consider this semi-discrete system first.  This says that, for each Newton step and time step, the FOSLS discretization gives a well-posed numerical system to solve.  We state the main theorem here and give the proof.  However, many of the details are omitted to save space and since they have been shown in previous works.     

\begin{thm}[cf. \cite{1994CaiZ_LazarovR_ManteuffelT_McCormickS-aa,1997CaiZ_ManteuffelT_McCormickS-aa,1998BochevP_CaiZ_ManteuffelT_McCormickS-aa,1999BochevP_ManteuffelT_McCormickS-aa,2009AdlerJ-aa}]\label{mainthm}
Let $\mathcal{L}$ be the linearized first order two-phase flow operator as in equations (\ref{gradV})-(\ref{phase1}) and let $G$ be the FOSLS functional as defined in equation (\ref{linfunc}) of this linearized system.  Let the domain, $\Omega$, be a bounded convex polyhedron with connected boundary or a simply connected region with a $C^{1,1}$ boundary.  Denote $\mathcal{U} = (\vec{u}, \mat{V}, p, \phi, \vec{B})^T$ and let
\noindent$$\mathcal{U} \in \mathcal{V} := [{\bf H^1}(\Omega)]^2 \times  [{\bf H^1}(\Omega)]^4 \times [L_0^2(\Omega) \cap H^1(\Omega)] \times [H^1_0(\Omega)] \times [{\bf H^1}(\Omega)].$$

\noindent Assume $\vec{u}^n$, $\mat{V}^n$, $\vec{B}^n$, and $\phi^n$ are given members of our finite-element space and, therefore,\\
$||\vec{u}^n||_{\infty}$, $||\mat{V}^n||_{\infty}$, $||\vec{B}^n||_{\infty}$, $||\mydiv \vec{B}^n||_{\infty}$, and $||\phi^n||_{\infty} < \infty$.  Then there exists positive constants, $c_1$ and $c_2$, that yield the following bounds:

\begin{itemize}
\item[a.] \be\label{apart} G  \leq c_2 ( ||\vec{u}||_1^2 + ||\mat{V}||_1^2 + ||p||_1^2 + ||\vec{B}||_1^2 + ||\phi||_1^2),  \ee
\item[b.] \be\label{bpart} c_1 ( ||\vec{u}||_1^2 + ||\mat{V}||_1^2 + ||p||_1^2 + ||\vec{B}||_1^2 + ||\phi||_1^2) \leq  G. \ee
\end{itemize}
\noindent These bounds mean that the functional, G, is equivalent to the $H^1$ norm of the error.  
\end{thm}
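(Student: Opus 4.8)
The plan is to establish the two bounds of the theorem by the standard FOSLS argument: the upper bound (part a) follows essentially from the triangle inequality applied term-by-term to the residuals, using boundedness of the linearization coefficients; the lower bound (part b) is the substantive direction and is obtained by reducing the full coupled system to decoupled pieces whose $H^1$-equivalence is already known from the cited literature, then absorbing the coupling terms as lower-order perturbations. Throughout, I would work with the semi-discrete (time-discretized) linearized operator $\mathcal{L}$ evaluated at the given iterate $(\vec{u}^n,\mat{V}^n,\vec{B}^n,\phi^n)$, so that the coefficients appearing in $\mathcal{L}$ are fixed $L^\infty$ functions with the bounds assumed in the hypothesis.

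First I would prove the upper bound. Writing $G(\mathcal{U})$ as the sum of the squared $L^2$-norms of the residuals of (\ref{gradV})--(\ref{phase1}), each residual is a linear combination of $\vec{u},\mat{V},p,\vec{B},\phi$ and their first derivatives, with coefficients that are either constants ($\mu,\lambda,\gamma,1/\epsilon^2$, the backward-difference coefficient $1/\Delta t$) or the frozen iterates $\vec{u}^n,\vec{B}^n,\mydiv\vec{B}^n,\phi^n$. Using $\|fw\|_0 \le \|f\|_\infty \|w\|_0$ for the variable-coefficient terms and the definition of the $H^1$ norm for the derivative terms, each residual norm is bounded by a constant times the $H^1$ norms of the unknowns; squaring and summing gives (\ref{apart}) with $c_2$ depending on the parameters and on $\|\vec{u}^n\|_\infty,\|\mat{V}^n\|_\infty,\|\vec{B}^n\|_\infty,\|\mydiv\vec{B}^n\|_\infty,\|\phi^n\|_\infty$. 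This step is routine.

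For the lower bound I would proceed in stages. The auxiliary equations (\ref{gradV})--(\ref{trace}) for $(\vec{u},\mat{V})$ together with the velocity-Dirichlet condition are, by the div--curl (Agmon--Douglis--Nirenberg / Cai--Manteuffel--McCormick) theory cited, $H^1$-equivalent: the combination $\|\mat{V}-\mygrad\vec{u}\|_0^2 + \|\curl\mat{V}\|_0^2 + \|\mydiv\vec{u}\|_0^2 + \|\mygrad(\mathrm{tr}\,\mat{V})\|_0^2$ bounds $\|\vec{u}\|_1^2 + \|\mat{V}\|_1^2$ below (on the convex polyhedron or $C^{1,1}$ domain, which is exactly why that hypothesis is imposed). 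Likewise (\ref{gradphi})--(\ref{curlB}) with the homogeneous Dirichlet condition on $\phi$ give a div--curl system controlling $\|\phi\|_1^2 + \|\vec{B}\|_1^2$ below. It remains to recover control of $\|p\|_1$ and to handle the genuinely coupled equations (\ref{nsfosls}) and (\ref{phase1}). For the pressure, I would use the momentum residual (\ref{nsfosls}): it contains $\mygrad p$ explicitly, so $\|\mygrad p\|_0$ is bounded by the residual norm plus $\|\partial_t \vec{u}\|_0 + \mu\|\mydiv\mat{V}\|_0 + \lambda\|\vec{B}^n(\mydiv\vec{B})\|_0 + \cdots$, all of which are already controlled; combined with $p \in L^2_0$ and a Poincaré--type inequality this yields $\|p\|_1^2$. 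The coupling terms $\lambda\vec{B}^n(\mydiv\vec{B})$, $\vec{u}^n\cdot\vec{B}$, $\vec{u}\cdot\vec{B}^n$, and $\frac{1}{\epsilon^2}\phi^n(\cdots)\phi$ (more precisely the linearized forms) are each bounded in $L^2$ by an $L^\infty$ coefficient times an $L^2$ norm of an unknown already dominated by the right-hand side, so they can be moved to the residual side with only a harmless multiplicative constant.

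The main obstacle is making the "absorb the coupling as lower-order terms" step rigorous rather than merely plausible: unlike in a purely elliptic perturbation argument, here one must verify that the coupling terms are dominated by the diagonal blocks with a constant strictly less than one after rescaling — equivalently, that the block operator is a compact (or small-norm) perturbation of the decoupled operator and that no spurious kernel is introduced. The cleanest way to do this is to note that every coupling term involves an unknown in an $L^2$ norm (no derivatives), while the decoupled functional already bounds the full $H^1$ norm of that unknown; since $\|w\|_0 \le \|w\|_1$, the perturbation is bounded by the same $H^1$ norms with coefficient proportional to the frozen $L^\infty$ quantities, and one concludes by a standard Young's-inequality / absorption argument (possibly after scaling the auxiliary equations by suitable positive weights, exactly as in \cite{1998BochevP_CaiZ_ManteuffelT_McCormickS-aa,1999BochevP_ManteuffelT_McCormickS-aa}) that $c_1 > 0$ exists. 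As the theorem statement itself notes, the detailed constants and the verification that they are not large are deferred; here the goal is only to exhibit the structure of the estimate and point to the prior works where each ingredient is carried out in full.
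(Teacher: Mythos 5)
Your overall skeleton matches the paper's: continuity by the triangle inequality, coercivity of the decoupled velocity--gradient and phase-field div--curl blocks from the cited theory (which is where the convexity/$C^{1,1}$ hypothesis and the boundary conditions enter), and then a final step to reinstate the coupling. The gap is in that final step, which you yourself flag as ``the main obstacle'' and then resolve with a Young's-inequality absorption. That argument requires the coupling coefficients --- proportional to $\lambda\|\vec{B}^n\|_\infty$, $\lambda\|\mydiv\vec{B}^n\|_\infty$, and $\|\vec{B}^n\|_\infty$ --- to be small relative to the coercivity constant of the decoupled system; the theorem's hypotheses give only finiteness, not smallness, so as written the absorption can fail to produce a positive $c_1$. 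Your fallback claim that ``every coupling term involves an unknown in an $L^2$ norm (no derivatives)'' is also not correct: the coupling into the momentum equation is $A_{NP}\vec{B}=\lambda\vec{B}^n\,\mydiv\vec{B}+\lambda(\mydiv\vec{B}^n)\vec{B}$, whose first term is first order in $\vec{B}$ and hence not a compact perturbation relative to $H^1$. Nor does reweighting the equations rescue the absorption, because the coupling is two-sided: scaling the phase-field equations up to absorb $A_{NP}$ inflates the contribution of the lower-left block $A_{PN}$ by the same factor.

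The paper closes this step with two devices that are insensitive to the size of the frozen coefficients. For the upper-triangular coupling it invokes Lemma~\ref{upptriag}: since the phase-block functional by itself controls $\|\mydiv\vec{B}\|_0$ (add the curl term and use a Poincar\'e inequality), one has $\|A_{NP}\vec{B}\|_0^2\le C_3\|A_{phase}\vec{B}\|_0^2$, and the triangular structure then gives the decoupled functional bounded by $(1+C_3)$ times the coupled one for \emph{any} $C_3$ --- the off-diagonal term is dominated by the \emph{other} diagonal block, which appears with a plus sign, so no smallness is needed. For the lower-left block $A_{PN}\vec{u}=\vec{B}^n\cdot\vec{u}$, which genuinely is zeroth order, the paper uses a standard compactness argument, again valid for a lower-order perturbation of arbitrary size (modulo injectivity). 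To repair your proof, replace the Young's-inequality absorption with these, or equivalent, structural arguments.
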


\noindent The spaces above are defined as follows:
\begin{eqnarray*}L_0^2(\Omega) &=& \{ p \in L^2(\Omega) : \int_{\Omega} p dx = 0 \},\\
H^1(\Omega) &=& \{ p\in L^2(\Omega): \mygrad p \in (L^2(\Omega))^3 \},\\
 {\bf H^1}(\Omega)^d \hspace{0.05in} &=&\mbox{the d-dimensional version of} \hspace{0.05in} H^1(\Omega),\\
H^1_0(\Omega) &=& \{ p\in L^2(\Omega): \mygrad p \in (L^2(\Omega))^3 \hspace{0.05in} \mbox{and} \hspace{0.05in} p = 0 \hspace{0.05in} \mbox{on} \hspace{0.05in} \partial\Omega \}. \\
\end{eqnarray*}

\begin{proof}[Proof of \ref{mainthm}]
Proving continuity, (\ref{apart}), of the linearized first-order system involves several instances of the triangle inequality and is, therefore, easy to show. 

Coercivity, (\ref{bpart}), is accomplished by treating the two coupled block systems, Navier-Stokes and the phase field equation, separately.  A linearized system of (\ref{gradV})-(\ref{phase1}) can be written in block form as follows:

$$\mathcal{A}\mathcal{U} = \left ( \begin{array}{cc} A_{NS}&A_{NP}\\A_{PN}&A_{phase}\\ \end{array} \right ) \left ( \begin{array}{c}\vec{u}\\\mat{V}\\p\\\phi\\\vec{B}\\\end{array} \right ) .$$
Here, $A_{NS}$ represents the linearized Navier-Stokes equations in first-order form and without the coupling term, (\ref{gradV})-(\ref{nsfosls}).  Similarly, $A_{phase}$ represents the linearized phase function equations without the velocity field coupling, (\ref{gradphi})-(\ref{phase1}).  The coupling terms are as follows:

$$A_{NP}\vec{B} = \lambda\vec{B}^n \mydiv\vec{B} + \lambda(\mydiv\vec{B}^n)\vec{B},$$
$$A_{PN}\vec{u} = \vec{B}^n\cdot\vec{u}.$$
The superscript $n$ represents that the solution is from a previous Newton step.  Since we are working on discrete finite element spaces, we can assume the $L^{\infty}$ norms of all the previous iterations are bounded. 

In \cite{1998BochevP_CaiZ_ManteuffelT_McCormickS-aa,1999BochevP_ManteuffelT_McCormickS-aa}, coercivity has been shown for the $A_{NS}$ block given here.  For the phase field system, the coercivity bound can be shown in a similar fashion.  In fact, ignoring the nonlinear terms, the system resembles the FOSLS formulation used on the heat equation \cite{1994CaiZ_LazarovR_ManteuffelT_McCormickS-aa,1997CaiZ_ManteuffelT_McCormickS-aa}:

$$\pd{\phi}{t}-\mydiv\vec{B} = f,$$
$$\vec{B}-\mygrad\phi = 0,$$
$$\curl\vec{B} = 0,$$
$$\phi = 0 \hspace{0.1in} \mbox{on} \hspace{0.1in} \partial\Omega,$$
$$\vec{n}\times\vec{B} = 0 \hspace{0.1in} \mbox{on} \hspace{0.1in} \partial\Omega.$$
Forming the functional of the linearized system for just the phase field block, we write

$$G_{phase} = ||\alpha\phi + \vb\cdot\vec{B} - \gamma \mydiv\vec{B}||_0^2 + ||\curl\vec{B}||_0^2 + ||\vec{B}-\mygrad\phi||_0^2.$$
Again, it is assumed that an implicit time-stepping scheme is used so $\alpha$ depends on the time step.  With part of the nonlinear coupling included in this block, $\vb$ depends on the parameters of the problem and the previous solutions.  This in fact makes the problem resemble an advection-diffusion type system.  Since this functional only adds zeroth-order terms (i.e. no derivatives) to this system, a standard compactness argument, \cite{1979GiraultV_RaviartP-aa}, can be used to show that there exists a constant, $C>0$, such that
$$C\left (||\vec{B}||_1^2 + ||\phi||_1^2\right ) \leq G_{phase}.$$
It should be noted that this requires $c||\vec{B}||_1^2 \leq ||\mydiv\vec{B}||_0^2 + ||\curl\vec{B}||_0^2$, for some constant $c>0$.  However, this is true due to the assumptions on the boundary and the boundary condition, $\vec{n}\times\vec{B}=0$ \cite{1979GiraultV_RaviartP-aa}.  Thus, the uncoupled system is $H^1$ equivalent:  there exists positive constants, $\hat{c}_L$ and $\hat{c}_U$, such that

$$\hat{c}_L||\mathcal{U}||_1^2 \leq ||\left ( \begin{array}{cc}A_{NS}&0\\0&A_{phase}\\\end{array} \right )\mathcal{U}||_0^2 \leq \hat{c}_U||\mathcal{U}||_1^2.$$

Next, we consider adding the off-diagonal coupling blocks of the system.  In \cite{2009AdlerJ-aa}, the following lemma was proved:
\begin{lem}\label{upptriag}
Let $\mathcal{L}$ be a $2\times2$ upper triangular block matrix such that
$$\mathcal{L}\mathcal{U} = \left ( \begin{array}{cc} A_1 &T\\
0&A_2\\\end{array} \right ) \left ( \begin{array}{c}
\vec{u}_1\\\vec{u}_{2}\\\end{array} \right ) = \left ( \begin{array}{c}\vec{f}\\\vec{g}\\\end{array} \right ). $$

\noindent Let $A_1$ and $A_2$ be invertible and let there exist a positive constant, $C$, such that
$$||T\vec{u}_2||_0^2 \leq C||A_2\vec{u}_2||_0^2.$$

\noindent Then
$$||\left ( \begin{array}{cc} A_1 &0\\
0&A_2\\\end{array} \right ) \left ( \begin{array}{c}
\vec{u}_1\\\vec{u}_{2}\\\end{array} \right )||_0^2 \leq (1 + C) ||\left ( \begin{array}{cc} A_1 &T\\
0&A_2\\\end{array} \right ) \left ( \begin{array}{c}
\vec{u}_1\\\vec{u}_{2}\\\end{array} \right )||_0^2. $$
\end{lem}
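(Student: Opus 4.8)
The plan is to reduce the inequality to a single scalar estimate for the coupling term, exploiting the fact that the two block operators differ only in their first row. Expanding the $L^2$ norms componentwise, the block-diagonal functional is $\|A_1\vec{u}_1\|_0^2 + \|A_2\vec{u}_2\|_0^2$, while the upper triangular functional is $\|A_1\vec{u}_1 + T\vec{u}_2\|_0^2 + \|A_2\vec{u}_2\|_0^2$. Hence the $\|A_2\vec{u}_2\|_0^2$ contributions already match on the two sides, and it remains only to bound $\|A_1\vec{u}_1\|_0^2$ in terms of $\|A_1\vec{u}_1 + T\vec{u}_2\|_0^2$ and $\|A_2\vec{u}_2\|_0^2$.

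For that, I would write $A_1\vec{u}_1 = (A_1\vec{u}_1 + T\vec{u}_2) - T\vec{u}_2$ and apply a weighted triangle (Young) inequality: for every $\delta > 0$,
$$\|A_1\vec{u}_1\|_0^2 \le (1+\delta)\,\|A_1\vec{u}_1 + T\vec{u}_2\|_0^2 + \bigl(1 + \delta^{-1}\bigr)\,\|T\vec{u}_2\|_0^2.$$
Invoking the hypothesis $\|T\vec{u}_2\|_0^2 \le C\,\|A_2\vec{u}_2\|_0^2$, adding $\|A_2\vec{u}_2\|_0^2$ to both sides and collecting terms yields
$$\|A_1\vec{u}_1\|_0^2 + \|A_2\vec{u}_2\|_0^2 \le (1+\delta)\,\|A_1\vec{u}_1 + T\vec{u}_2\|_0^2 + \bigl(1 + (1+\delta^{-1})C\bigr)\,\|A_2\vec{u}_2\|_0^2.$$
Choosing $\delta$ to equalize the two coefficients (i.e. solving $\delta^2 - C\delta - C = 0$) collapses the right-hand side to a single constant times $\|A_1\vec{u}_1 + T\vec{u}_2\|_0^2 + \|A_2\vec{u}_2\|_0^2$, which is precisely the triangular functional, giving a bound of the advertised type with the constant an explicit increasing function of $C$; already the crude choice $\delta = 1$ gives the bound with constant $\max\{2,\,2C+1\}$, which is enough for the qualitative consequence that the upper triangular operator inherits coercivity from the block-diagonal one.

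I expect the only mildly delicate point to be the bookkeeping of this constant; every other step is a direct computation with the $L^2$ inner product. In particular, invertibility of $A_1$ and $A_2$ is not needed for the inequality itself — it is assumed because the lemma is meant to be used with $A_1 = A_{NS}$, $A_2 = A_{phase}$ (reordering the roles, if necessary, so the surviving coupling sits above the diagonal): combined with the block-diagonal $H^1$-equivalence $\hat{c}_L\|\mathcal{U}\|_1^2 \le \|\mathrm{diag}(A_{NS},A_{phase})\mathcal{U}\|_0^2$ established above, the lemma transfers coercivity to the fully coupled two-phase operator and thereby closes the proof of Theorem \ref{mainthm}.
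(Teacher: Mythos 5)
Your approach is the standard one and, up to the value of the constant, is essentially the argument behind this lemma in the cited reference \cite{2009AdlerJ-aa} (the paper itself only quotes the result rather than reproving it): expand the block $L^2$ norm, observe that the $\|A_2\vec{u}_2\|_0^2$ contributions coincide on both sides, split $A_1\vec{u}_1=(A_1\vec{u}_1+T\vec{u}_2)-T\vec{u}_2$, and absorb $\|T\vec{u}_2\|_0^2$ into $C\|A_2\vec{u}_2\|_0^2$. Your bookkeeping is also correct: Young's inequality with parameter $\delta$ produces the coefficients $1+\delta$ and $1+(1+\delta^{-1})C$, these balance at $\delta^2-C\delta-C=0$, and so the constant you actually obtain is $1+\tfrac{1}{2}\bigl(C+\sqrt{C^2+4C}\bigr)$, or $\max\{2,\,1+2C\}$ with the crude choice $\delta=1$. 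Your remark that invertibility of $A_1$ and $A_2$ plays no role in the inequality itself is also right.

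The point you should make explicit rather than leave implicit in the phrase ``a bound of the advertised type'' is that your constant is strictly larger than $1+C$, and that this is not slack in your argument: the constant $1+C$ in the statement is not attainable. Take everything scalar with $A_1=A_2=T=1$, so the hypothesis holds with $C=1$; choosing $u_2=1$ and $u_1=-(1+\sqrt{5})/2$ gives $u_1^2+u_2^2\approx 3.62$ while $(1+C)\bigl((u_1+u_2)^2+u_2^2\bigr)\approx 2.76$, and indeed $1+\tfrac{1}{2}\bigl(C+\sqrt{C^2+4C}\bigr)\approx 2.62$ is the sharp constant for this configuration, so no refinement of the estimate can recover $1+C$. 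What your proof establishes is the corrected version of the lemma --- the block-diagonal functional is dominated by the upper triangular one times some explicit constant depending only on $C$, e.g.\ $2(1+C)$ --- and that weaker statement is all the application requires, since the coercivity transfer in the proof of Theorem \ref{mainthm} only needs the existence of such a constant. As a proof of the lemma with the literal constant $(1+C)$, however, the argument cannot be completed, because that statement is false.
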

\vspace{0.2in}

Looking at the upper triangular portion of the two-phase flow system, we see that
$$||A_{NP}\vec{B}||_0^2 =  ||\lambda\vec{B}^n \mydiv\vec{B} + \lambda(\mydiv\vec{B}^n)\vec{B}||_0^2 \leq C_1||\vec{B}||_0^2 + C_2||\mydiv\vec{B}||_0^2,$$
for some positive constants $C_1$ and $C_2$.  Adding a curl term at the end and using a Poincar\'{e} inequality, it is easily shown from the coercivity of the phase-field block that 
$$||A_{NP}\vec{B}||_0^2 \leq C_3||A_{phase}\vec{B}||_0^2,$$
for some constant, $C_3>0$.  Thus, from Lemma \ref{upptriag}, we know that

$$\frac{\hat{c}_L}{1+C_3}||\mathcal{U}||_1^2 \leq ||\left ( \begin{array}{cc}A_{NS}&A_{NP}\\0&A_{phase}\\\end{array} \right )\mathcal{U}||_0^2$$

Finally, to show equivalence of the full linearized system, it should be noted that the lower left block, $A_{PN}$, involves a term with no derivatives and another standard compactness argument can be used.  Here, again, we must assume that the previous iterations of $\vec{u}$ and $\vec{B}$ are bounded.  Thus, there exists positive constants $c_1$ and $c_2$ such that
$$c_1||\mathcal{U}||_1^2 \leq ||\left ( \begin{array}{cc}A_{NS}&A_{NP}\\A_{PN}&A_{phase}\\\end{array} \right )\mathcal{U}||_0^2 \leq c_2||\mathcal{U}||_1^2.$$
\end{proof}

Therefore, for each time step and linearization, there exists a weak solution to our linear system, and our multilevel solvers will converge to that solution.  The constants $c_1$ and $c_2$ depend on physical parameters such as $\gamma$, $\mu$, and $\epsilon$, as well as the previous Newton step iterations and time step approximations.  However, they do not depend on the number of degrees of freedom or the size of the finite element grids being used.   In the next section, we discuss the energetics of complex fluid flow.

\section{Energetics}\label{energetics}

While most physical systems can be described by a set of PDEs, usually an underlying energy law also describes the system.  In many cases, the PDE is actually derived from the energy laws via a calculus of variation (see e.g. \cite{1963GelfandI_FominS-aa}).  The main idea is that the change in the total energy of a system over time must equal the total dissipation of the system.  If the system is conservative, or there is no dissipation, then the total change is zero and it is a Hamiltonian system.  Thus, the interaction or coupling
between different scales or phases plays a crucial role in understanding complex fluids.  Any set of equations that describe the system should then be a result of the energy laws. 
The energetic variational approach (EVA) takes the energy laws of a complex fluid model and,
using the least action principle (LAP) and the maximum/minimum dissipation
principle (MDP) \cite{1931OnsagerL-aa,1931OnsagerL-ab,1953OnsagerL_MachlupS-aa,1953MachlupS_OnsagerL-aa}, yields a weak form of the system, which we approximately preserve in our numerical simulations.  The energy variation is based on the following energy dissipation law for the whole
coupled system:

\be\label{evalaw} \pd{E^{total}}{t} = -\mathcal{D},\ee
where $E^{total}$ is the total energy of the system and $\mathcal{D}$ is the dissipation. The
LAP, which is also referred to as the Hamiltonian principle, or principle of virtual work,
gives us the Hamiltonian (reversible) part of the system related to the conservative
force. At the same time, the MDP gives the dissipative (irreversible) part of the system
related to the dissipative force.

In \cite{2010HyonY_KwakD_LiuC-aa,2004YueP_FengJ_LiuC_ShenJ-aa,1979AllenS_CahnJ-aa,2003LiuC_ShenJ-aa,2010ShenJ_YangX-aa} and others, an energy law is developed for the Allen-Cahn phase-field model described above.  The total energy is a combination of the kinetic energy (driven by the fluid) and the internal energy (driven by the interface):

$$E^{total} = \int_{\Omega} \left \{ \frac{1}{2}|\vec{u}|^2 + \lambda W(\phi) \right \} d\vec{x},$$
where $W(\phi)$ is the Ginzburg Landau mixing energy described above \cite{2010HyonY_KwakD_LiuC-aa}, thus representing a competition between two fluids with their hydrophilic and hydrophobic properties.  The dissipation is a result of a diffusive term from the viscosity of the fluid and from the diffusion of the interface itself:
$$\mathcal{D} =  \int_{\Omega} \left \{ \mu|\mygrad\vec{u}|^2 + \frac{\lambda}{\gamma} | \dot{\phi} |^2 \right \} d\vec{x} = \int_{\Omega} \left \{ \mu|\mygrad\vec{u}|^2 + \lambda\gamma \left | \mygrad^2\phi - \frac{1}{\epsilon^2}\left ( \phi^2 - 1 \right ) \phi \right |^2 \right \} d\vec{x}.$$

\noindent Thus, any system describing this behavior, such as equations (\ref{ns})-(\ref{phase}) or the FOSLS formulation (\ref{gradV})-(\ref{phase1}), should globally produce results that approximately satisfy the following energy law:

\begin{equation} \dfrac{d}{dt} \int_{\Omega}   \left \{ \frac{1}{2}|\vec{u}|^2 + \lambda W(\phi) \right \} d\vec{x} = - \int_{\Omega} \left \{ \mu|\mygrad\vec{u}|^2 + \lambda\gamma \left | \mygrad^2\phi - \frac{1}{\epsilon^2}\left ( \phi^2 - 1 \right ) \phi \right |^2 \right \} d\vec{x}. \end{equation} 

The overall energy law reflects the multi-scale, multi-physics nature of the system.  As mentioned above, the variable $\phi$ represents the microscopic description of the mixture.  The kinematic assumption, $\dot{\phi} = \pd{\phi}{t} + \vec{u}\cdot\mygrad\phi$, stands for the influence of macroscopic dynamics on the microscopic scale.  The form of the internal dissipation is the consequence that we are looking at the long (or macroscopic) time dynamics.  This brings us to the equation for $\phi$, 
$$\pd{\phi}{t} + \vec{u}\cdot\mygrad\phi = \gamma \left ( \mygrad^2 \phi - \frac{1}{\epsilon^2} \left ( \phi^2 -1 \right ) \phi \right ). $$
This ``gradient flow" dynamics is another formulation of the near equilibrium, linear response theory \cite{1953OnsagerL_MachlupS-aa,1953MachlupS_OnsagerL-aa}.  

As for the macroscopic force balance, the LAP gives the following Hamiltonian system, with $\gamma = 0$:
\begin{eqnarray*} \pd{\vec{u}}{t} + \left (\vec{u}\cdot\mygrad\right ) \vec{u} + \mygrad p &=& -\lambda\mydiv\left ( \mygrad\phi\otimes\mygrad\phi\right ),\\
\mydiv\vec{u} &=& 0,\\
\pd{\phi}{t} + \vec{u}\cdot\mygrad\phi &=& 0.\\
\end{eqnarray*}
This describes the transient dynamics of the system.  

On the other hand, the MDP yields the dissipation of system, which stands for the long macroscopic time dynamics:
\begin{eqnarray*} -\mu\mygrad^2\vec{u} + \mygrad p &=& -\frac{\lambda}{\gamma} \dot{\phi}\mygrad\phi,\\
\mydiv\vec{u} &=& 0,\\
\pd{\phi}{t} + \vec{u}\cdot\mygrad\phi &=& \gamma \left ( \mygrad^2 \phi + \frac{1}{\epsilon^2} \left (\phi^2 - 1 \right )\phi \right ).\\
\end{eqnarray*}

\begin{rem}  $\frac{\lambda}{\gamma}\dot{\phi}\mygrad\phi = \lambda \left ( \mygrad^2 \phi + \frac{1}{\epsilon^2}\left (\phi^2 -1 \right )\phi \right )\mygrad\phi = \lambda\mygrad^2\phi\mygrad\phi + \lambda\mygrad\left (\frac{1}{4\epsilon^2}\left (\phi^2-1\right )^2 \right )$.  This shows the consistency between the LAP and the MDP \cite{2010HyonY_KwakD_LiuC-aa}.
\end{rem}

\noindent System (\ref{ns})-(\ref{phase}) is really the hybrid of these two systems.

\begin{rem} It is worth pointing out that due to the generality of the formulation, analogies of (\ref{ns})-(\ref{phase}) can be found in many different physics.  In particular, without the $\int_{\Omega}\frac{1}{4\epsilon^2}\left (\phi^2-1\right)^2 d\vec{x}$ term, the system is equivalent to the resistive MHD system.  In fact, it is this understanding that allows us to arrive at a similar weak form for the FOSLS algorithms as in \cite{2010AdlerJ_ManteuffelT_McCormickS_RugeJ-aa}.  The choice of using the gradient of $\phi$ as an auxiliary variable in (\ref{Bgradphi}) was made for this reason.  \end{rem}

Finally, we point out that the advantage of the above energetic variation approach enables us to derive the thermodynamic-consistent models involving different physics at different scales.  Restricting to a finite-dimensional space, we show numerically that the FOSLS discretization method is capable of globally preserving this energy law.

\section{Numerical results}\label{results}

In \cite{2010AdlerJ_ManteuffelT_McCormickS_RugeJ-aa,2010AdlerJ_ManteuffelT_McCormickS_RugeJ_SandersG-aa}, an algorithm is devised to solve a system of nonlinear equations, $\mathcal{L}(u) = f$.  Starting on a coarse grid with an initial guess there, the system is linearized and the linearized FOSLS functional is then minimized on a finite element space.  At this point, several algebraic multigrid (AMG) V-cycles are performed until there is little to gain in accuracy-per-computational-cost.  The system is then relinearized and the minimum of the new linearized FOSLS functional is searched for in the same manner. After each set of linear solves, the relative difference between the computed linearized functional and the functional of the nonlinear system is checked.  If they are close and near the minimum of the linearized functional, then it is concluded that we are close enough to the minimum of the functional of the nonlinear operator and, hence, we have a good approximation to the solution on the given grid.  Next, the approximation is interpolated to a finer grid and the problem is solved on that grid.  At this stage, the grid can be refined uniformly or locally.  This process is repeated to yet finer grids until an acceptable error has been reached, or until we have run out of computational resources, such as memory.  If, as in the case of the 2-phase flow equations, it is a time-dependent problem, the whole process is performed at each time step.  This algorithm is summarized in the following pseudocode.



\begin{algorithm}[H] 
\SetAlgoLined 
\For{$t=1$ \KwTo max time step}{
Go to coarsest grid\\
\While{fine grid resolution is not reached}{
\For{$n=1$ \KwTo max Newton step}{
Linearize first-order system\\
Minimize FOSLS functional\\
\While{functional of current approximation is large}{
Solve $Ax = b$ with AMG}
Compute relative difference between linearized and nonlinear operator\\
\If{small}{Exit Newton loop}
}
Refine grid
}
Update time step
}
\caption{Nested Iteration Newton-FOSLS AMG}
\end{algorithm}

The main goal behind this algorithm is to reduce the amount of work needed to solve the system of equations.  Relatively cheap work is done on the coarse computational grid to get better starting guesses for the solution of the system on the finer finite element grids.  Then, less iterations are needed at the resolution that would require the most work.  The algorithm described above also takes into account the fact that resolving too much on coarse grids is unnecessary.  Using the FOSLS functionals as estimates of the errors, stopping criteria is implemented for each stage of the algorithm, optimizing the amount of work per computational cost.

In addition to the nested iteration algorithm, adaptive local refinement is used.  Again, using the FOSLS a posteriori error estimates, an efficiency-based adaptive local refinement scheme, known as ACE \cite{2010AdlerJ_ManteuffelT_McCormickS_NoltingJ_RugeJ_TangL-aa,2008De-SterckH_ManteuffelT_McCormickS_NoltingJ_RugeJ_TangL-aa} can be used fairly easily.  The idea behind this scheme is to predict the amount of work and the error reduction obtained from refining only certain portions of the computational domain.  Then, the optimal grid is chosen.   In simulations of magnetohydrodynamics, which have similar structure to the complex fluid problems described here, this scheme has been shown to reduce the amount of work by a factor of ten in the FOSLS framework \cite{2010AdlerJ_ManteuffelT_McCormickS_NoltingJ_RugeJ_TangL-aa,2010AdlerJ_ManteuffelT_McCormickS_NoltingJ_RugeJ_TangL-ab}.

For the following test problems, we use the FOSLS finite element method along with nested iteration, AMG, and adaptive local refinement.  Here we show that, using these methods, we are able to solve the complex fluid problems well, while still adequately preserving the energetics of the system.  

\subsection{Two-Phase Flow}
\subsubsection{Coalescence}

In this test problem, we start with two bubbles of a certain phase immersed in a fluid of a different phase on the unit square, $\Omega = [0,1]\times[0,1]$.  The initial interface between the two fluids is given such that the two bubbles are osculating or ``kissing", and then the system is allowed to evolve.  As a result, the two bubbles begin to coalesce into one and, eventually, due to the dissipation in the system, the newly formed bubble slowly shrinks and is absorbed by the outside fluid.  The initial conditions for this system are similar to that in \cite{2010HyonY_KwakD_LiuC-aa} and are as follows:

\begin{eqnarray*}
\vec{u} &=&\left ( \begin{array}{c}0\\0\\\end{array} \right ),\\
\phi &=&\tanh \left(\dfrac{d_1(x,y)}{2\eta}\right ) + \tanh \left(\dfrac{d_2(x,y)}{2\eta}\right ) - 1,\\ 
\end{eqnarray*}

\noindent where 
$$d_1(x,y) = \sqrt{(x-0.38)^2+(y-0.5)^2}-0.11,$$
and 
$$d_2(x,y) = \sqrt{(x-0.62)^2+(y-0.5)^2}-0.11.$$
The parameters used are $\eta = 0.01$, $\mu = 1$,$\epsilon = 0.01$, $\gamma = 0.01$, and $\lambda = 0.0001$.

A second-order fully implicit backward differencing formula (BDF-2) is used and allows us to take a larger time step for the simulation than in previous work \cite{2010HyonY_KwakD_LiuC-aa}.  For the test problem shown, 100 time steps of size $\Delta t = 0.01$ are taken.  Plots of the phase field, $\phi$, are given in figure \ref{coalesce}. Here, one can see the coalescence of the two bubbles over time and the eventual dissipation.  In addition, more refinement is done in the region of the interface.  Therefore, most of the computation is being focussed on resolving this region, which is driving the physics in the system.  

\begin{figure}[h!]
\centering
\includegraphics[scale=0.13]{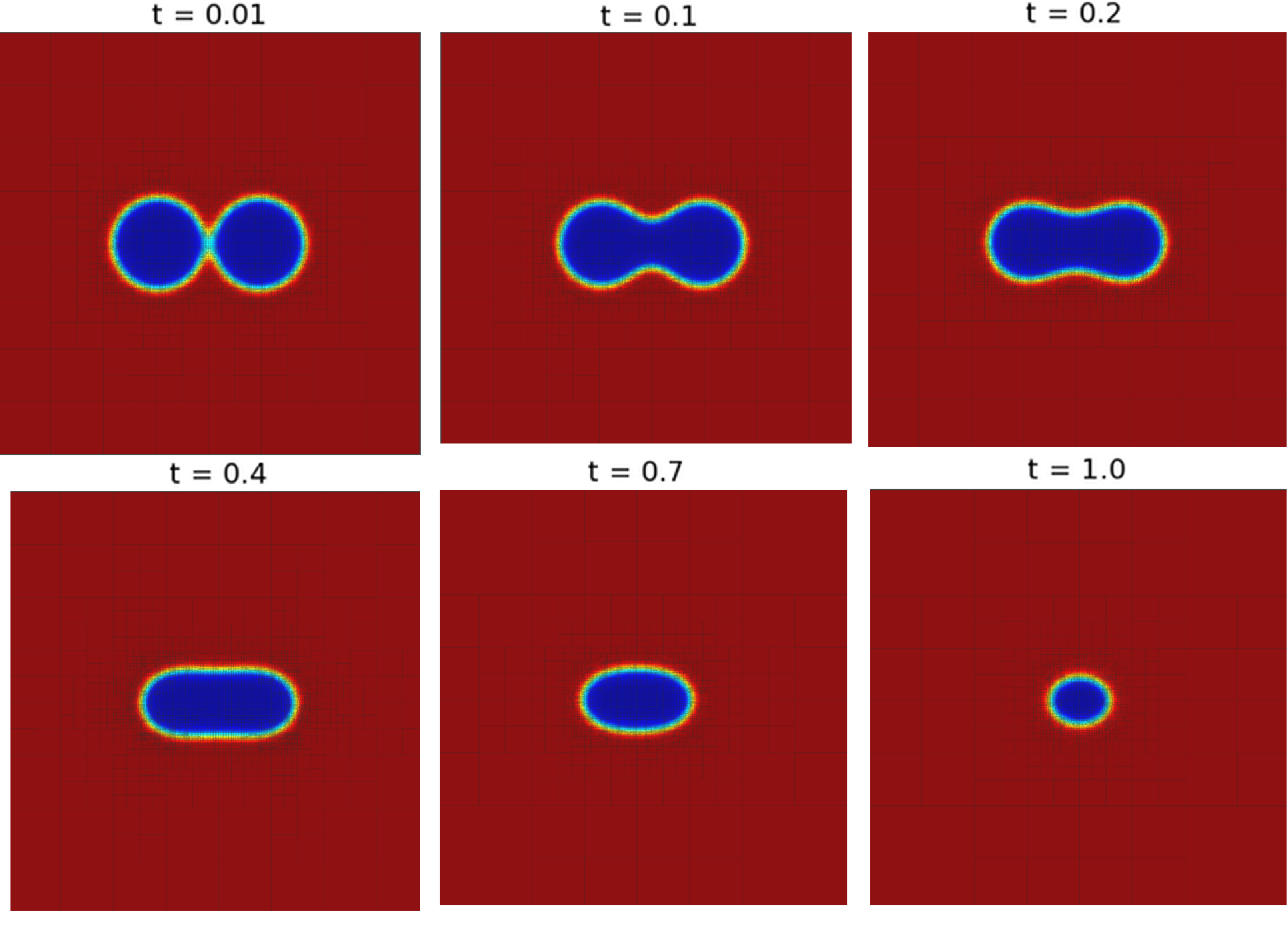}
\caption{Phase field, $\phi$, at time t = 0.01, 0.1, 0.2, 0.4, 0.7, and 1.0.  Red represents 1.0 and blue represents -1.0.}
\label{coalesce}
\end{figure}

To quantify how well the nested iteration algorithm with adaptive local refinement performs, the simulation was run using uniform refinement.  An estimate of the work is computed by calculating the number of non-zeroes in each discrete system and determining how many equivalent fine-grid matrix-vector operations are performed.  We call this measure a work unit (WU), and it is a measure of the cost of performing one relaxation sweep, such as Gauss-Seidel or Jacobi, on the finest grid of the simulation.  Table \ref{WUresult} shows that, while using adaptive refinement requires more work units (i.e. more V-cycles are needed to solve the problem), on average the number of non-zeroes on the finest grid is about one-tenth of that for uniform refinement.  This corresponds to a work ratio of about 0.21.  In other words, using adaptive local refinement saved about 80\% of the computational cost.  Therefore, on average, the above strategies on this test problem cost the equivalent of performing less than 49 Gauss-Seidel relaxation sweeps on a quadratic uniform grid with $65,536$ ($256\times256$) elements.  It should be noted here that the functional or estimate of the error for adaptive refinement is about half of the functional obtained from using uniform refinement on average.   Both schemes got below the prescribed tolerance, but, on many time steps, the adaptive algorithm got to a much lower error (occasionally by an order of magnitude).  Therefore, as far as accuracy-per-computational cost, the adaptive refinement scheme may be even better than the numbers indicate.  What we can say is that it used less than 9\% of the elements and took almost one-fifth of the computational time to solve the problem.  

\begin{table}[h!]
\centering
\begin{tabular}{|ccc|}
\hline
Avg WU&Avg Nonzeros&Avg Functional\\
\hline
&&\\
&Uniform&\\
&&\\
227.1&281,292,867&0.456\\
\hline
&&\\
&Adaptive&\\
&&\\
530.56&24,955,707&0.265\\
\hline
&&\\
Work Ratio&&Element Ratio\\
0.213&&0.089\\
\hline
\end{tabular}
\caption{Average estimates of work and accuracy using uniform refinement versus adaptive local refinement}
\label{WUresult}
\end{table}

In addition, Table \ref{NewtsCoal} shows that, for each refinement level, the number of Newton steps needed to solve the nonlinear problem decreases from almost 5 on the coarse grids to 1 on the finer grids.  For all time steps, on the finest-grid, only one Newton step was needed.  This greatly reduces the cost of the algorithm compared to if one were to only solve on the finest grid.  

\begin{table}[h!]
\centering
\begin{tabular}{|c|c|c|c}
\hline
Grid&Elements&Avg Newton Steps\\
\hline
10&4&4.6\\
9&4&2.2\\
8&16&2.9\\
7&47.1&2.7\\
6&84.9&2.3\\
5&178.1&2.0\\
4&417.7&2.0\\
3&1,005.0&1.1\\
2&2,084.9&1.0\\
1&5,812.4&1.0\\
\hline
\end{tabular}
\caption{Average Newton steps per grid level over all time steps.  Level = 1 is the finest grid.  The first grid (level 10) uses bilinear elements. }
\label{NewtsCoal}
\end{table}

Next, we see in figure \ref{coalesceE} that the energy law of this system is satisfied during the simulation.  The total conservative energy over the whole domain is decreasing and the change in energy coincides with the dissipation in the system.  Thus, the numerical simulation is resolving the expected physics of the system.

\begin{figure}[h!]
\centering
\includegraphics[scale=0.33]{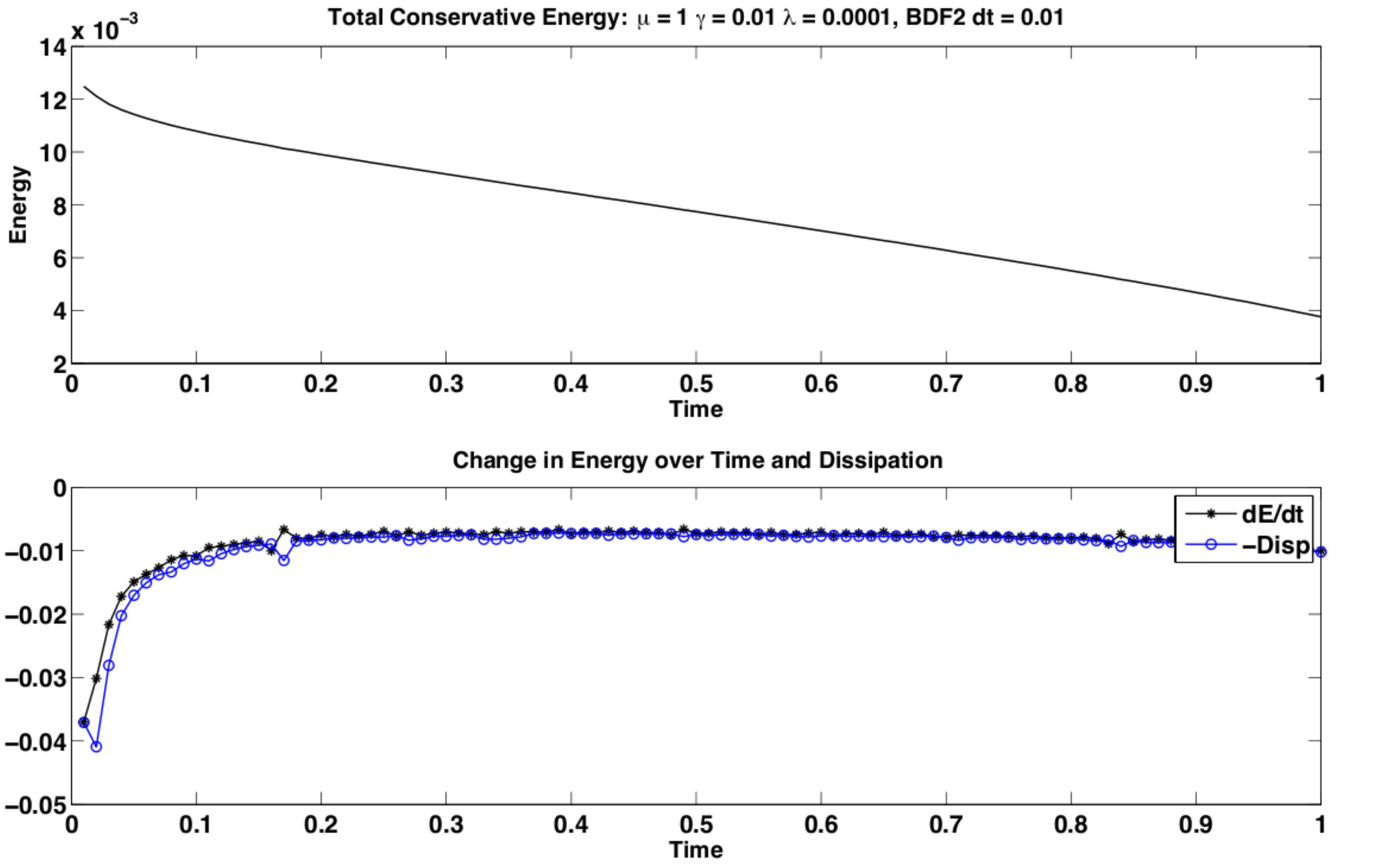}
\caption{Energy laws for coalescence problem.  Top:  Total conservative energy over time.  Bottom:  Change in energy (asterix) over time versus negative dissipation (circle)}
\label{coalesceE}
\end{figure}

\subsubsection{Square Bubble}

In this test problem, we start with a single bubble immersed in a fluid of a different phase on the unit square, $\Omega = [0,1]\times[0,1]$.  The initial interface between the two fluids is given such that the bubble is in the shape of a square.  As time evolves, the interface forces cause the bubble to transform to a circular shape before diffusing away.  In the process, oscillations between a square and diamond shape are formed.  This reflects the elastic short-time behavior of the system.  The initial conditions for this system are similar to that in \cite{2003LiuC_ShenJ-aa} and are as follows:

\begin{eqnarray*}
\vec{u} &=&\left ( \begin{array}{c}0\\0\\\end{array} \right ),\\
\phi &=& \left \{ \begin{array}{cc} +1&(x,y) \in [-\frac{1}{4},\frac{1}{4}]\times[-\frac{1}{4},\frac{1}{4}]\\-1&\mbox{otherwise}\\\end{array} \right \}.\\ 
\end{eqnarray*}
The parameters used are $\mu = 0.1$, $\epsilon = 0.02$, $\gamma = 0.01$, and $\lambda = 0.1$.

Again, a second-order fully implicit backward differencing formula (BDF-2) is used.  For the test problem shown, 100 time steps of size $\Delta t = 0.01$ are taken.  The phase field is shown in figure \ref{square}.  As in the previous example, the refinement is being placed in the appropriate regions and requires about 20\% of the work that would be required if uniform refinement were used.  

\begin{figure}[h!]
\centering
\includegraphics[scale=0.13]{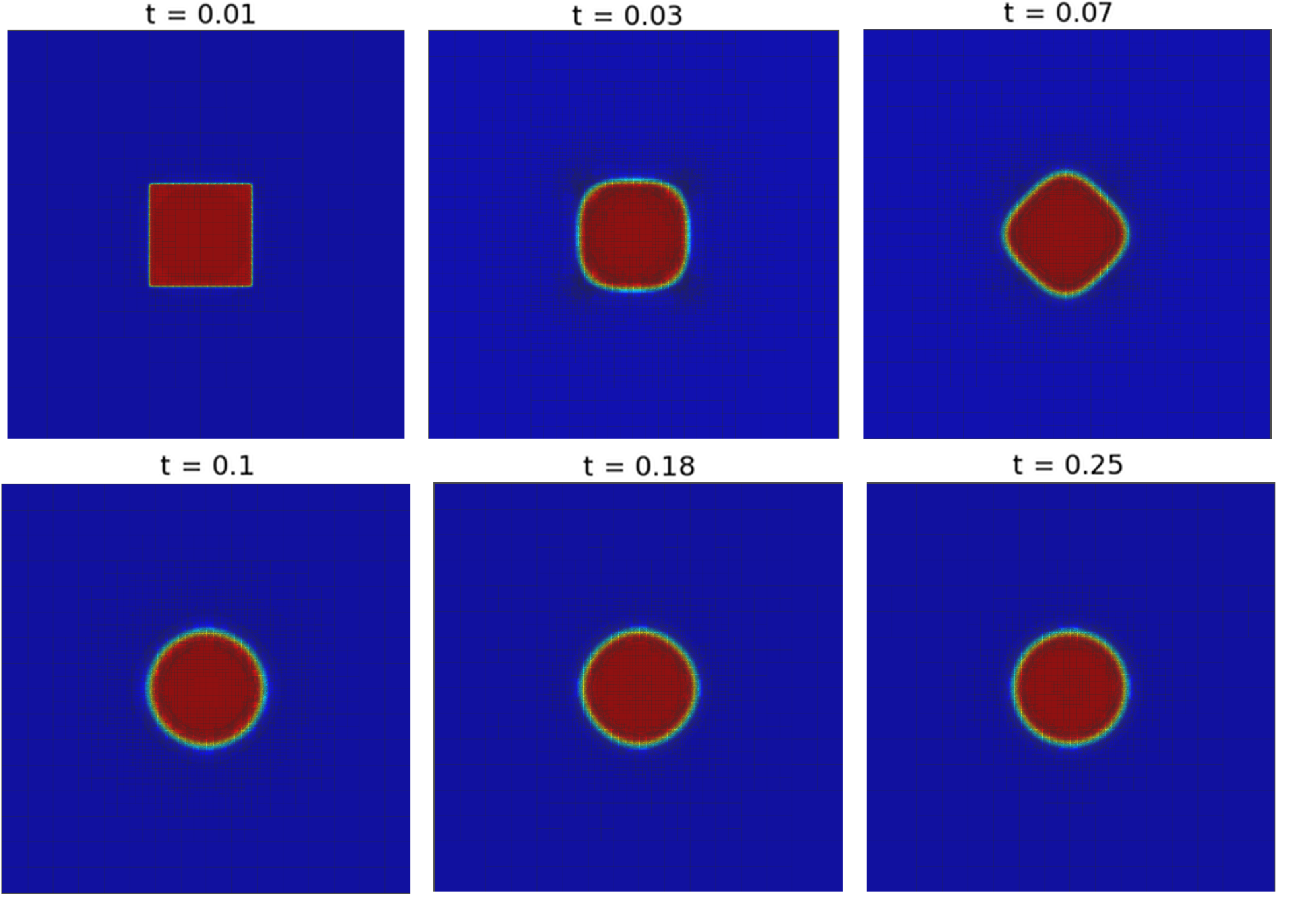}
\caption{Phase field, $\phi$, at time t = 0.01, 0.03, 0.07, 0.1, 0.18, and 0.25.  Red represents 1.0 and blue represents -1.0.}
\label{square}
\end{figure}

Due to the bigger jump at the interface, though, more Newton steps are needed in this problem than in the first.  However, using nested iteration results in many of these linearizations being performed on the coarser grids.  As an example, at time step 3, when the bubble begins to lose its square shape, the number of Newton steps goes from 10 to 1 as we move up through the grids.  See table \ref{NIresults}.

\begin{table}[h!]
\centering
\begin{tabular}{|c|c|c|c}
\hline
Grid&Elements&Newton Steps\\
\hline
9&16&10\\
8&64&4\\
7&112&5\\
6&208&5\\
5&463&5\\
4&1,423&4\\
3&3,766&3\\
2&12,793&3\\
1&40,030&1\\
\hline
\end{tabular}
\caption{Newton steps per grid level for time step 3.  Level = 1 is the finest grid.}
\label{NIresults}
\end{table}

In figure \ref{squareE}, one can see that the energy laws are being satisfied once again.  An initial increase in energy occurs before the dissipation in the system takes over, causing the energy to decay.  This is most likely due to the fact that the initial condition of the phase field is discontinuous at the interface.  Once the simulation is started, this function is immediately smoothed out and the energy behaves as expected.  By increasing $\gamma$, the elastic relaxation time in the system, the dissipation is increased and actually causes the bubble to dissipate away much faster.  In fact, the entire bubble vanishes after time step 30 and the total energy goes to zero.  This can be seen in the energy plots in figure \ref{squareE2}.  The tiny jump in energy appears to occur when the region containing the bubble finally disappears and the system is changed to that of one fluid.  With a smaller time step, this transition occurs more smoothly.  

\begin{figure}[h!]
\centering
\includegraphics[scale=0.33]{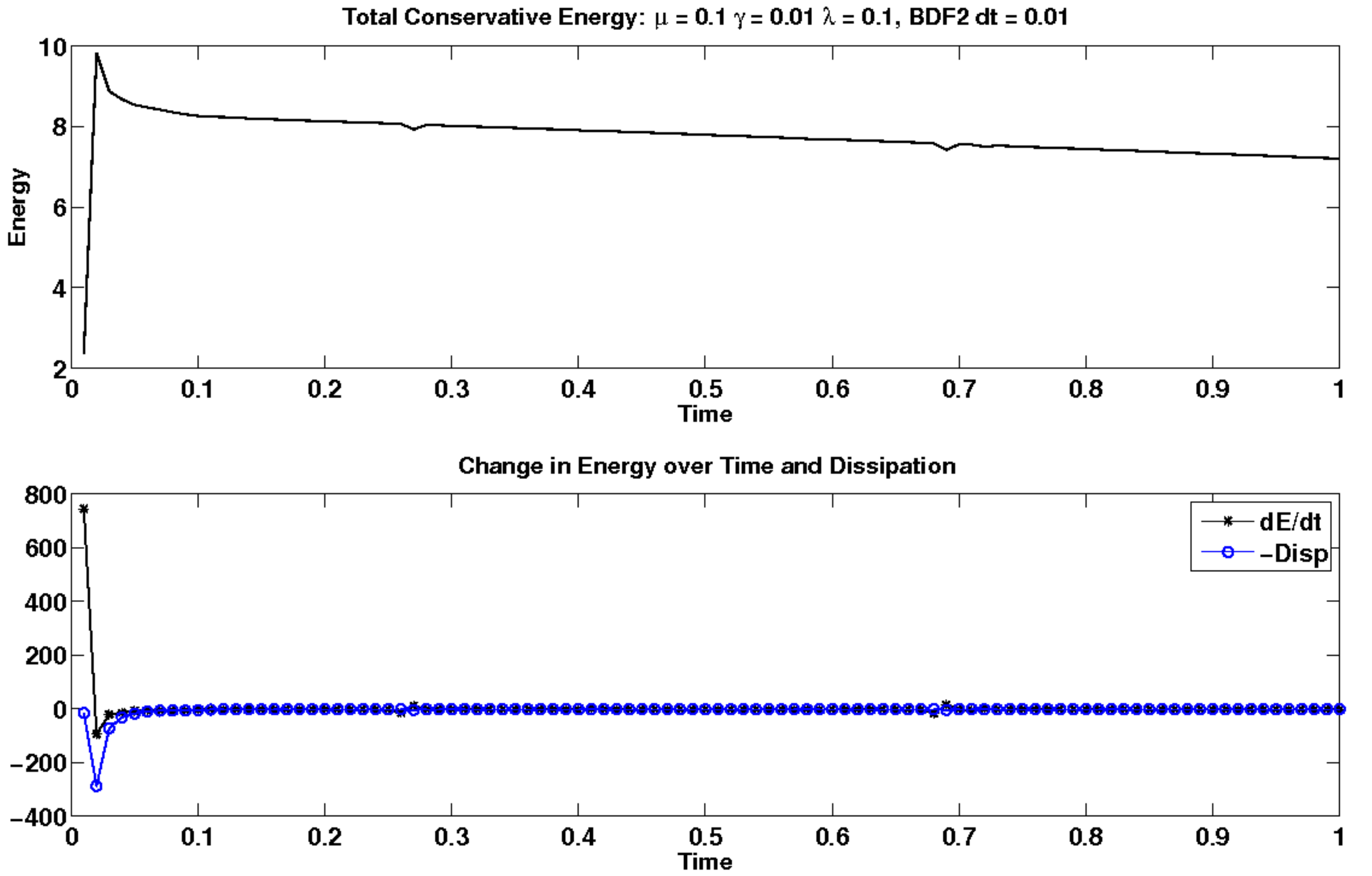}
\caption{Energy laws for square problem ($\gamma = 0.01$).  Top:  Total conservative energy over time.  Bottom:  Change in energy (asterix) over time versus negative dissipation (circle)}
\label{squareE}
\end{figure}

\begin{figure}[h!]
\centering
\includegraphics[scale=0.33]{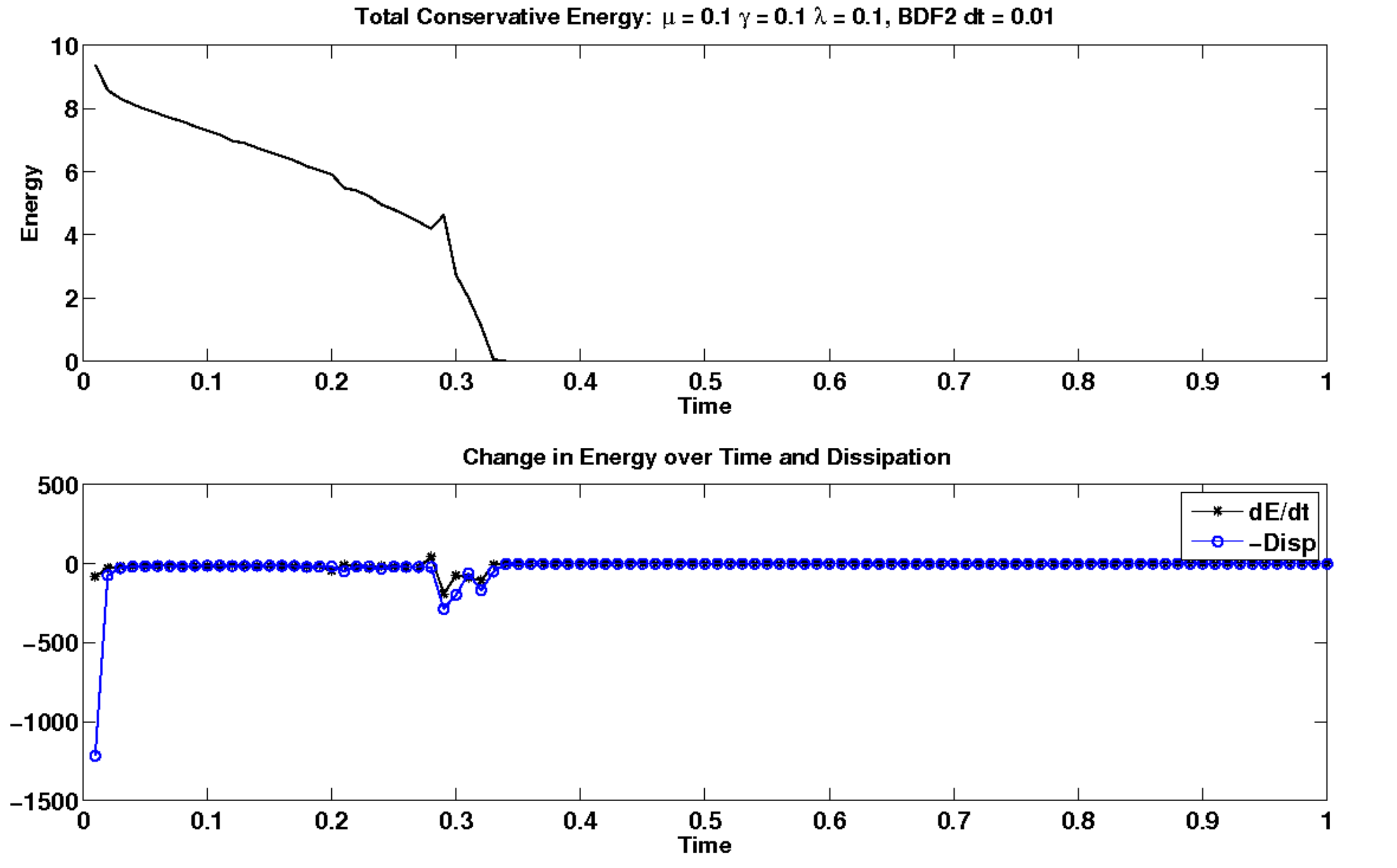}
\caption{Energy laws for square problem ($\gamma = 0.1$).  Top:  Total conservative energy over time.  Bottom:  Change in energy (asterix) over time versus negative dissipation (circle)}
\label{squareE2}
\end{figure}

\subsection{MHD}
As described above, the characteristics of the two-phase system is similar to that of resistive MHD.  The main results and description of this problem using the FOSLS and nested iteration framework can be found in \cite{2010AdlerJ_ManteuffelT_McCormickS_RugeJ-aa,2010AdlerJ_ManteuffelT_McCormickS_RugeJ_SandersG-aa,2010AdlerJ_ManteuffelT_McCormickS_NoltingJ_RugeJ_TangL-aa}.  However, here, we briefly mention the energetics of the system.  For this system, the energy law must satisfy the following equation:
$$\dfrac{d}{dt} \int_{\Omega} \left \{ \frac{1}{2}|\vec{u}|^2 + \frac{1}{2}|\vec{B}|^2 \right \} d\vec{x} = - \int_{\Omega} \left \{ \frac{1}{R_e} |\mygrad \vec{u}|^2 + \frac{1}{S_L}|\mygrad\vec{B}|^2 \right \} d\vec{x}.$$
Here, $\vec{u}$ represents the fluid velocity and $\vec{B}$ represents the magnetic field.  $R_e$ and $S_L$ are the fluid Reynolds number and magnetic Lundquist number, respectively.  

A magnetic reconnection problem is simulated using the above numerical algorithm.  Figure \ref{mhdenergy} shows that the discrete energy law is being satisfied.  The total conservative energy decays as a direct result of the dissipation in the system.  The oscillations in the change in energy plot are due to a lower order differentiation being used to compute $\frac{dE}{dt}$ after the simulation was complete.  Similar to the first two examples, it took on average of 10 WU to solve the problem using nested iteration and adaptive local refinement.  

\begin{figure}[h!]
\centering
\includegraphics[scale=0.33]{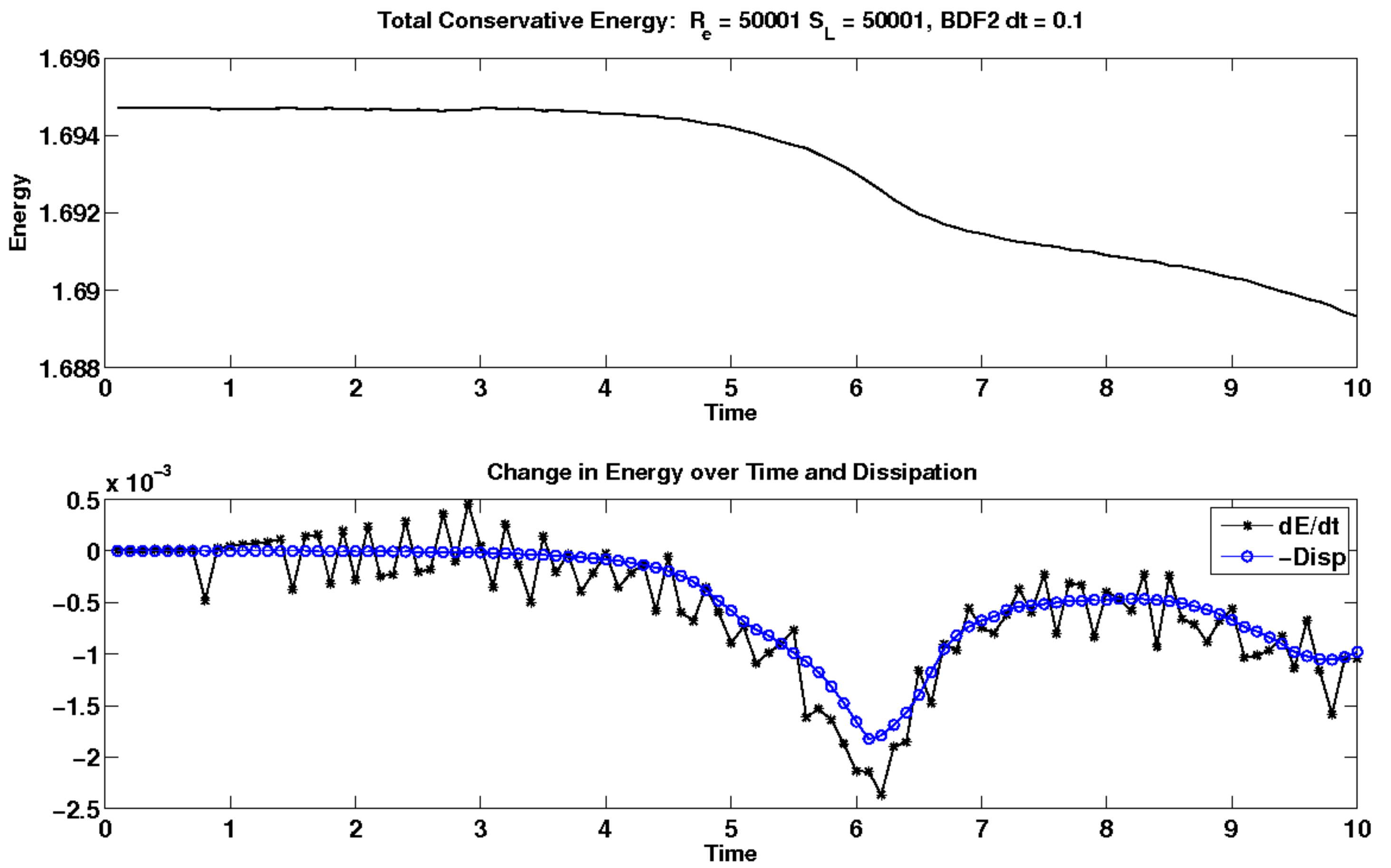}
\caption{Energy laws for mhd test problem.  Top:  Total conservative energy over time.  Bottom:  Change in energy (asterix) over time versus negative dissipation (circle)}
\label{mhdenergy}
\end{figure}


\section{Discussion}\label{conclude}  This paper shows that using the Newton-FOSLS finite-element formulation, along with nested iteration, algebraic multigrid, and adaptive local refinement, accurately resolves two-phase flow simulations while still preserving the energetics of the system.  The numerical algorithms used were designed in a general setting and have the advantage of reducing the amount of computational cost.  For the two-phase flow problems, it is possible to resolve the physics in the equivalent of less than 50 fine-grid relaxation sweeps for each time step.  This is accomplished by reducing the amount of work done on highly resolved grids and by reducing the number of elements needed to solve the problem via adaptive local refinement.  As a result, complicated systems of PDEs can be solved very efficiently without losing the ability to measure certain physical quantities.  This paper shows that, when applied to two-phase flow simulations, all the advantages of using implicit finite element methods are obtained, and the FOSLS methodology is capable of capturing the important aspects of the model, such as the energy laws.

Despite the success of the above algorithms in capturing the correct energy laws efficiently, there has been one bottleneck.  In all the simulations, classical algebraic multigrid (AMG) was used to solve the discrete linear systems.  Classical AMG was designed for scalar elliptic problems and M-matrices.  As can be seen in figure \ref{wuplot}, the average convergence factors for AMG over each time step can be quite poor when it is used on systems for which it was not designed.  While it is always bounded away from 1, one can see that the worse the AMG convergence factor gets, the more computational work is needed to solve the problem.  Therefore, future work will involve investigating various multigrid methods that will be more well-suited to the test problems described above.  Algebraic methods that are designed more specifically for systems of PDEs will be studied as well as geometric multigrid.  All the domains have been well structured and, therefore, it is possible to use more of the geometric information to create a better solver.  If it is possible to improve the linear solver using these methods, then the amount of work can be reduced even further.

\begin{figure}[h!]
\centering
\includegraphics[scale=0.43]{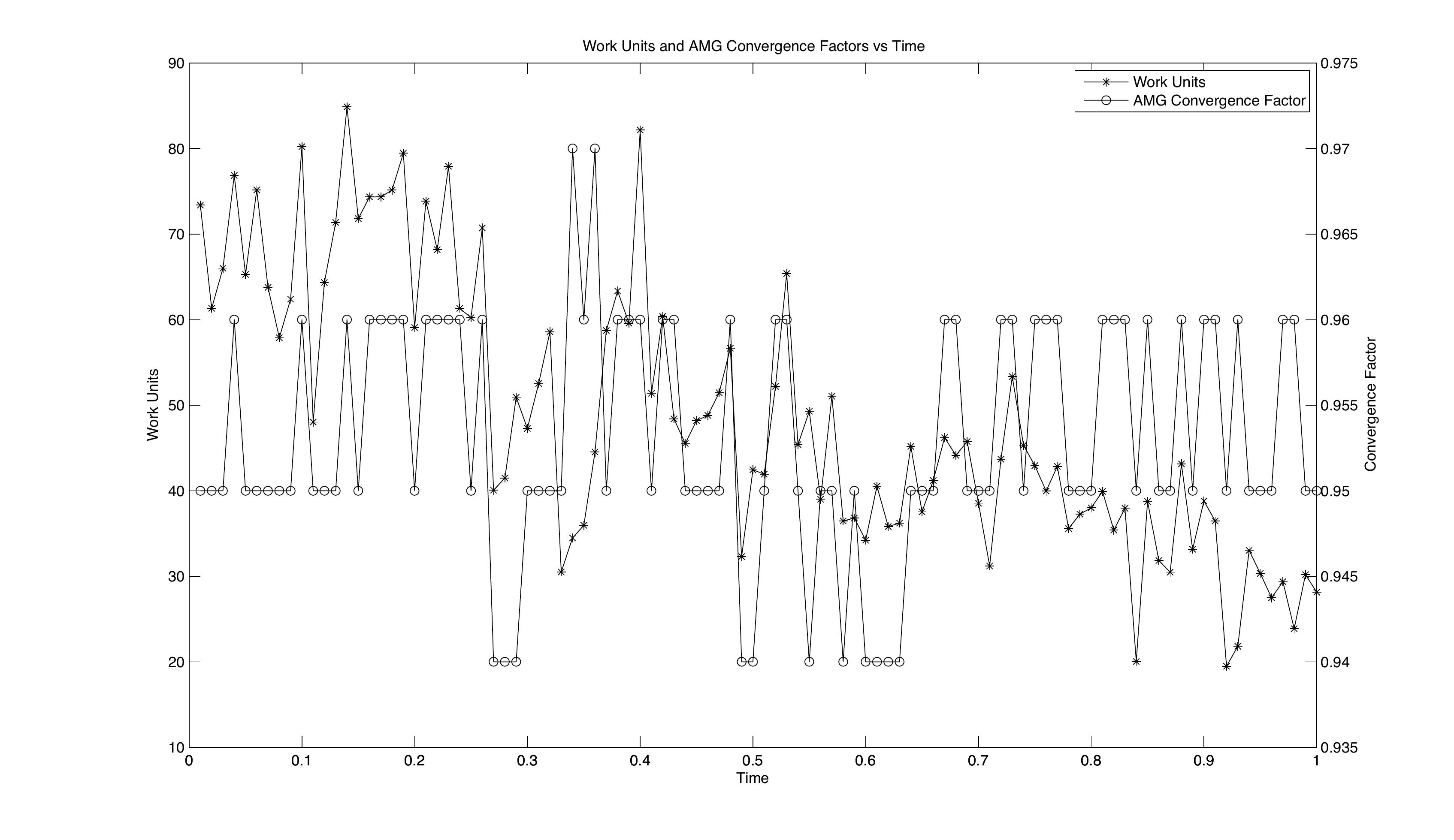}
\caption{Work units per time step as well as average convergence factor per time step.}
\label{wuplot}
\end{figure}

\section*{Acknowledgments}
This work was sponsored by the National Science Foundation under grants NSF DMS-0074299, NSF OCI-0749202, NSF DMS-0707594, EAR-0621199, OCI-0749317, DMS-0811275, and DMS-0509094, by the Department of Energy under grant numbers DE-FG02-03ER25574 and DE-FC02-06ER25784, and by Lawrence Livermore National Laboratory under contract numbers B58858.

We would also like to thank Marian Brezina, Steve McCormick, and John Ruge from the University of Colorado for their useful comments on this work.

\bibliographystyle{elsarticle-num}   
\bibliography{mybib2}        

\end{document}